\DeclareSymbolFont{cyss}{OT2}{wncyss}{m}{n}
\DeclareSymbolFont{cyr}{OT2}{wncyr}{m}{n}
\DeclareMathSymbol{\sh}{\mathbin}{cyss}{`x}
\newcommand{\Li}{\operatorname{Li}}
\newcommand{\C}{{\mathbf C}}
\newcommand{\bP}{{\mathbf P}}
\newcommand{\cL}{{\mathcal L}}
\newcommand{\cM}{{\mathcal M}}
\newcommand{\ds}{\displaystyle}
\newtheorem{thm}{Theorem}
\newtheorem{prop}[thm]{Proposition}
\title{The inversion formula of polylogarithms and \\ the Riemann-Hilbert problem}
\author{OI, Shu and UENO, Kimio}
\date{}
\begin{document}
\maketitle
\insert\footins{\footnotesize 2010 {\it Mathematics Subject Classification.} Primary 34M50,11G55; Secondary 30E25,11M06,32G34;}

\begin{abstract}
In this article, we set up a method of reconstructing the polylogarithms $\Li_k(z)$ from zeta values $\zeta(k)$ via the Riemann-Hilbert problem. This is referred to as ``a recursive Riemann-Hilbert problem of additive type.'' Moreover, we suggest a framework of interpreting the connection problem of the Knizhnik-Zamolodochikov equation of one variable as a Riemann-Hilbert problem.
\end{abstract}

\section{Introduction}
Polylogarithms $\Li_k(z)$\, $(k\ge 2)$ satisfy {\bf the inversion formula}
\begin{equation*}
\Li_k(z)+\sum_{j=1}^{k-1}\frac{(-1)^j\log^j z}{j!}\Li_{k-j}(z)+\Li_{2,\underbrace{\scriptstyle 1,\ldots,1}_{k-2}}(1-z)=\zeta(k).
\end{equation*}
Applying the Riemann-Hilbert problem of additive type (alternatively, Plemelj-Birkhoff decomposition) \cite{Bi, Mu, Pl} to this inversion formula, we show that $\Li_k(z)$ can be reconstructed from boundary values $\zeta(k)$. We prove this by using the Riemann-Hilbert problem recursively so that we refer to this method as \text{\bf a recursive Riemann-Hilbert problem of additive type}.

As a generalization of this method, we can reconstruct multiple polylogarithms $\Li_{k_1,\ldots,k_r}(z)$ from multiple zeta values $\zeta(k_1,\ldots,k_r)$. This is nothing but interpreting {\bf the connection relation}\cite{OiU}
\begin{align*}
\cL(z)=\cL^{(1)}(z)\, \varPhi_{\mathrm KZ}
\end{align*}
between the fundamental solutions of {\bf the Knizhnik-Zamolodochikov equation of one variable} (KZ equation, for short)
\begin{align*}
\frac{dG}{dz}= \left( \frac{X_0}{z}+\frac{X_1}{1-z}\right) G
\end{align*}
as a Riemann-Hilbert problem. Here $\varPhi_{\mathrm KZ}$ is {\bf Drinfel'd associator} and $\cL(z)$ (resp. $\cL^{(1)}(z)$) is the fundamental solution of KZ equation normalized at $z=0$ (resp. $z=1$). We have completely solved this problem and a preprint is now in preparation.

\paragraph{\bf Acknowledgment}

The first author is supported by Waseda University Grant for Special Research Projects No. 2011B-095. The second author is partially supported by JSPS Grant-in-Aid No. 22540035.

\section{The inversion formula of polylogarithms}

For  positive integers $k$, polylogarithms $\Li_k(z)$ are introduced as follows: 
First we set $\Li_1(z)=-\log(1-z)$. In the domain $D=\C\setminus \{z=x\;|\; 1\le x\}$, 
$\Li_1(z)$ has a branch such that $\Li_1(0)=1$ (the principal value of $\Li_1(z)$). 
Starting from the principal value of $\Li_1(z)$, we introduce $\Li_k(z)$, 
which are holomorphic on $D$, recursively by
\begin{equation}
\Li_k(z)=\int_0^z\frac{\Li_{k-1}(t)}{t}dt \qquad (k\ge 2). \label{def:PL}
\end{equation}
where the integral contour is assumed to be in $D$. Then $\Li_k(z)$ has a Taylor expansion
\begin{equation}
\Li_k(z)=\sum_{n=1}^\infty \frac{z^n}{n^k}
\end{equation}
on $|z|<1$. We obtain, for $k\ge 2$, 
\begin{equation}
\lim_{z \to 1,z \in D}\Li_k(z)=\zeta(k), \label{PL_zeta}
\end{equation}
where $\zeta(k)$ is the Riemann zeta value $\ds \zeta(k)=\sum_{n=1}^\infty \frac{1}{n^k}$.

From \eqref{def:PL}, we have differential recursive relations:
\begin{equation}
\frac{d}{dz}\Li_1(z)=\frac{1}{1-z},\qquad \frac{d}{dz}\Li_k(z)=\frac{\Li_{k-1}(z)}{z} \qquad (k \ge 2).
\end{equation}

By virtue of \eqref{def:PL}, $\Li_k(z)$ is analytically continued to a many-valued analytic function on $\bP^1\setminus\{0,1,\infty\}$. However, in this article, we will use the notation $\Li_k(z)$ as the principal value stated previously.\\	

We also define multiple polylogarithms $\Li_{2,1,\ldots,1}(z)$ ($k\ge 2$) as
\begin{align}
\Li_{2,\underbrace{\scriptstyle 1,\ldots,1}_{k-2}}(z)&=\int_0^z\frac{(-1)^{k-1}}{(k-1)!}\frac{\log^{k-1}(1-t)}{t}dt. 
            \label{def:MPL21...1}
\end{align}

By using these relations and \eqref{PL_zeta}, one can obtain easily {\bf the inversion formula} of polylogarithms.
\begin{prop}[the inversion formula of polylogarithms]
For $k\ge 2$, the following functional relation holds.
\begin{equation}
\Li_k(z)+\sum_{j=1}^{k-1}\frac{(-1)^j\log^j z}{j!}\Li_{k-j}(z)+\Li_{2,\underbrace{\scriptstyle 1,\ldots,1}_{k-2}}(1-z)
=\zeta(k). \label{inversion01}
\end{equation}
\end{prop}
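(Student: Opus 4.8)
My plan is to verify the identity \eqref{inversion01} by differentiating both sides with respect to $z$, checking that the derivatives agree, and then pinning down the constant of integration by evaluating at a convenient point. The right-hand side is constant, so the whole content is in showing that the derivative of the left-hand side vanishes identically on $D$.

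First I would set $F_k(z)$ equal to the left-hand side of \eqref{inversion01} and compute $F_k'(z)$ using the differential recursive relations $\frac{d}{dz}\Li_k(z)=\frac{\Li_{k-1}(z)}{z}$ for $k\ge 2$ together with $\frac{d}{dz}\Li_1(z)=\frac{1}{1-z}$, and the defining relation \eqref{def:MPL21...1}, which gives $\frac{d}{dz}\Li_{2,1,\ldots,1}(1-z) = -\frac{(-1)^{k-1}}{(k-1)!}\frac{\log^{k-1}z}{1-z}$ after the substitution $t\mapsto 1-t$. Differentiating the sum $\sum_{j=1}^{k-1}\frac{(-1)^j\log^j z}{j!}\Li_{k-j}(z)$ by the product rule produces two families of terms: one from differentiating $\log^j z$, giving $\frac{(-1)^j\log^{j-1}z}{(j-1)!}\cdot\frac{1}{z}\Li_{k-j}(z)$, and one from differentiating $\Li_{k-j}(z)$, giving $\frac{(-1)^j\log^j z}{j!}\cdot\frac{\Li_{k-j-1}(z)}{z}$ when $k-j\ge 2$ and $\frac{(-1)^j\log^j z}{j!}\cdot\frac{1}{1-z}$ when $k-j=1$. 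The key step is then a telescoping cancellation: the first family (reindexed) cancels against $\Li_k'(z)=\frac{\Li_{k-1}(z)}{z}$ and against the second family, while the leftover $\frac{(-1)^{k-1}\log^{k-1}z}{(k-1)!}\cdot\frac{1}{1-z}$ coming from the $j=k-1$ term of the second family cancels exactly against $\frac{d}{dz}\Li_{2,1,\ldots,1}(1-z)$. So $F_k'(z)\equiv 0$ on $D$.

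Since $D$ is connected, $F_k$ is constant, and it remains to evaluate the constant. I would not take $z\to 0$ naively because $\log^j z$ blows up there; instead the clean choice is to let $z\to 1$ within $D$. Then $\log z\to 0$, so every term in the sum vanishes; $\Li_{2,1,\ldots,1}(1-z)\to\Li_{2,1,\ldots,1}(0)=0$ by \eqref{def:MPL21...1}; and $\Li_k(z)\to\zeta(k)$ by \eqref{PL_zeta}. Hence $F_k(z)\equiv\zeta(k)$, which is \eqref{inversion01}.

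The main obstacle is purely bookkeeping: making the reindexing in the telescoping step precise, being careful that the terms with $k-j=1$ (where $\Li_1$ obeys a different differential rule) are handled separately from those with $k-j\ge 2$, and confirming that no boundary term is lost when differentiating under the integral sign in \eqref{def:MPL21...1}. There is no deep difficulty — the substitution $t\mapsto 1-t$ and the chain rule are routine — but the sign tracking through the alternating factors $(-1)^j$ needs attention, which is presumably why the proposition is stated as following ``easily'' from the preceding relations.
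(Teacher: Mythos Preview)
Your proposal is correct and follows essentially the same route as the paper: differentiate the left-hand side, observe (via the recursive relations for $\Li_k$ and the definition \eqref{def:MPL21...1}) that it vanishes identically, and then determine the constant by letting $z\to 1$ in $D$ and invoking \eqref{PL_zeta}. You merely spell out the telescoping cancellation and the limit argument in more detail than the paper does; the only small point you might make explicit is that the $j=k-1$ term of the sum involves $\log^{k-1}z\cdot\Li_1(z)$, which still tends to $0$ as $z\to 1$ since the polynomial decay of $\log^{k-1}z$ dominates the logarithmic blow-up of $\Li_1(z)$.
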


\begin{proof}
Differentiating the left hand side of the equation \eqref{inversion01}, we have
\begin{align*}
&\frac{d}{dz}\left(\Li_k(z)+\sum_{j=1}^{k-1}\frac{(-1)^j\log^j z}{j!}\Li_{k-j}(z)+\Li_{2,\underbrace{\scriptstyle 1,\ldots,1}_{k-2}}(1-z)\right)=0.
\end{align*}
Therefore the left hand side of \eqref{inversion01} is a constant. Taking the limit of the left hand side of \eqref{inversion01} as $z \in D$ tends to $1$ and using \eqref{PL_zeta}, we see that the constant is equal to $\zeta(k)$.
\end{proof}

The branch of $\Li_{2,\underbrace{\scriptstyle 1,\ldots,1}_{k-2}}(1-z)$ on the domain $D'=\C\setminus\{z=x\;|\; x\le 0\}$ is determined from the principal value of $\log z$.\\

\section{The recursive Riemann-Hilbert problem of additive type}

Let $D^{(+)}, D^{(-)}$ be domains of $\C$ defined by
\begin{align*}
D^{(+)}&=\{z=x+yi\;|\; x<1,\; -\infty<y<\infty\} \subset D,\\
D^{(-)}&=\{z=x+yi\;|\; 0<x,\; -\infty<y<\infty\} \subset D'.
\end{align*}

\begin{figure}
\begin{picture}(0,3.5)(0,0)
\put(0,0){\includegraphics{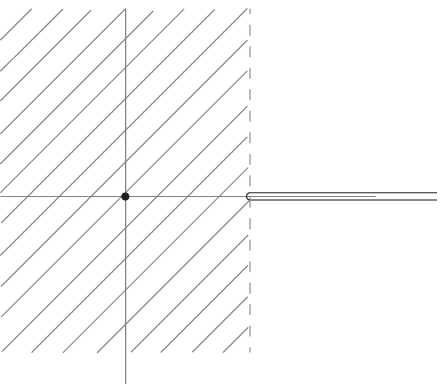}}
\put(0.5,3.5){$D^{(+)}$}
\put(1.1,1.55){0}
\put(2.55,1.55){1}
\put(6,0){\includegraphics{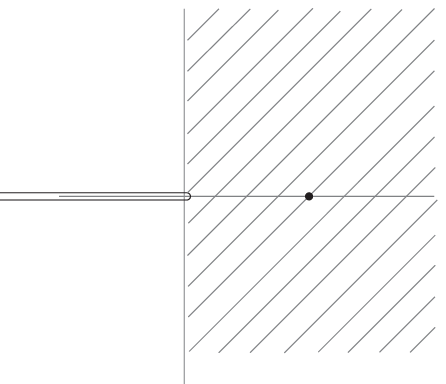}}
\put(9.5,3.5){$D^{(-)}$}
\put(7.6,1.55){0}
\put(9.05,1.55){1}
\end{picture}
\caption{The domains $D^{(+)}, D^{(-)}$.}
\end{figure}

The following theorem says that polylogarithms $\Li_k(z)$ are characterized by the inversion formula.

\begin{thm}
Put $f_1^{(+)}(z)=\Li_1(z)$. For $k\ge 2$, we assume that $f_k^{(\pm)}(z)$ are holomorphic functions on 
$D^{(\pm)}$ satisfying the functional relation
\begin{equation}
f_k^{(+)}(z)+\sum_{j=1}^{k-1}\frac{(-1)^j\log^j z}{j!}f_{k-j}^{(+)}(z)+f_k^{(-)}(z)=\zeta(k) 
\quad (z \in D^{(+)}\cap D^{(-)}), \label{PL_RH01}
\end{equation}
the asymptotic conditions
\begin{equation}
\frac{d}{dz}f_k^{(\pm)}(z) \to 0 \quad (z \to \infty,\; z \in D^{(\pm)}), \label{PL_RH01_asym}
\end{equation}
and the normalization condition
\begin{equation}
f_k^{(+)}(0)=0. \label{PL_RH01_norm}
\end{equation}
Then we have
\begin{equation*}
f_k^{(+)}(z)=\Li_{k}(z),\qquad f_k^{(-)}(z)=\Li_{2,\underbrace{\scriptstyle 1,\ldots,1}_{k-2}}(1-z) \qquad (k \ge 2).
\end{equation*}
\end{thm}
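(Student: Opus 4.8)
The plan is to prove both identities together by induction on $k$. The case $k=1$ is just the hypothesis $f_1^{(+)}=\Li_1$ (at that level there is neither a relation \eqref{PL_RH01} nor a function $f_1^{(-)}$). So I fix $k\ge 2$ and assume as inductive hypothesis that $f_j^{(+)}(z)=\Li_j(z)$ on $D^{(+)}$ for every $1\le j\le k-1$; note that only these ``$(+)$-parts'' appear in \eqref{PL_RH01} at level $k$, so nothing about the lower $f_j^{(-)}$ is needed.

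First I would introduce the two error functions
\[
g^{(+)}(z):=f_k^{(+)}(z)-\Li_k(z)\quad(z\in D^{(+)}),\qquad
g^{(-)}(z):=f_k^{(-)}(z)-\Li_{2,\underbrace{\scriptstyle 1,\ldots,1}_{k-2}}(1-z)\quad(z\in D^{(-)}),
\]
which are holomorphic on $D^{(+)}$ and $D^{(-)}$ respectively because $D^{(+)}\subset D$ and $D^{(-)}\subset D'$ lie inside the domains of holomorphy of $\Li_k$ and of $z\mapsto\Li_{2,1,\ldots,1}(1-z)$. Subtracting the genuine inversion formula \eqref{inversion01} from the assumed relation \eqref{PL_RH01}, and using the inductive hypothesis to cancel the weighted sums together with the two copies of $\zeta(k)$, one is left with $g^{(+)}(z)+g^{(-)}(z)=0$ for $z\in D^{(+)}\cap D^{(-)}$. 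Since $D^{(+)}\cup D^{(-)}=\C$ and this vanishing holds on the whole overlap (which is the connected strip $\{0<\operatorname{Re}z<1\}$), the functions $g^{(+)}$ and $-g^{(-)}$ patch together into a single entire function $h$ on $\C$, equal to $g^{(+)}$ on $D^{(+)}$ and to $-g^{(-)}$ on $D^{(-)}$. This gluing is exactly the Plemelj--Birkhoff (Riemann--Hilbert of additive type) decomposition alluded to in the introduction.

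Next I would use the asymptotic condition \eqref{PL_RH01_asym} together with Liouville's theorem to force $h\equiv 0$. Differentiating via \eqref{def:PL} and \eqref{def:MPL21...1}, on $D^{(+)}$ one has $h'(z)=\frac{d}{dz}f_k^{(+)}(z)-\frac{\Li_{k-1}(z)}{z}$, and on $D^{(-)}$ one has $h'(z)=-\frac{d}{dz}f_k^{(-)}(z)+\frac{(-1)^{k}}{(k-1)!}\,\frac{\log^{k-1}z}{1-z}$. By \eqref{PL_RH01_asym} the $f$-terms go to $0$ as $z\to\infty$, and the two remaining terms, being a polynomial in $\log z$ divided by $z$ (respectively by $1-z$), also go to $0$. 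Hence $h'$ is entire and $h'(z)\to 0$ as $z\to\infty$, so $h'\equiv 0$ and $h$ is constant; evaluating at $z=0$ with \eqref{PL_RH01_norm} and $\Li_k(0)=0$ gives $h(0)=g^{(+)}(0)=0$, hence $h\equiv 0$. This yields $f_k^{(+)}=\Li_k$ on $D^{(+)}$ and $f_k^{(-)}=\Li_{2,1,\ldots,1}(1-z)$ on $D^{(-)}$, closing the induction.

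The one step I expect to need genuine care, rather than being purely formal, is the verification that $\frac{d}{dz}\Li_k(z)$ and $\frac{d}{dz}\bigl[\Li_{2,1,\ldots,1}(1-z)\bigr]$ really tend to $0$ at $z=\infty$ in $D$; this comes down to the standard estimate that $\Li_{k-1}(z)$ grows at most like a power of $\log|z|$ on $D$, proved by an easy induction from the integral representations \eqref{def:PL} and \eqref{def:MPL21...1}, and it is exactly what legitimises the Liouville argument. Everything else is soft: the containments $D^{(+)}\subset D$, $D^{(-)}\subset D'$, the identity $D^{(+)}\cup D^{(-)}=\C$, and the connectedness of $D^{(+)}\cap D^{(-)}$ are all immediate from the definitions of $D^{(\pm)}$.
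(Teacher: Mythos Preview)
Your proof is correct and follows essentially the same approach as the paper: induction on $k$, a Plemelj--Birkhoff patching of a function holomorphic on $D^{(+)}$ with one holomorphic on $D^{(-)}$, and Liouville's theorem applied via the asymptotic condition \eqref{PL_RH01_asym}. The only cosmetic difference is that you subtract the inversion formula \eqref{inversion01} first and patch the error functions $g^{(\pm)}$ directly, whereas the paper differentiates \eqref{PL_RH01} and lets the sum telescope before invoking Liouville; your packaging avoids that explicit computation but the underlying argument is the same.
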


\begin{proof}
We prove the theorem by induction on $k \ge 2$. For the case $k=2$, 
the proof can be done in the same manner as the case $k>2$ from the definition of $f^{(+)}_1(z)$. 
So we assume that %the theorem is proved for $j=2,\ldots,k-1$, namely 
$f_j^{(+)}(z)=\Li_j(z)$ 
and $f_j^{(-)}(z)=\Li_{2,\underbrace{\scriptstyle 1,\ldots,1}_{j-2}}(1-z)$ for $2\le j \le k-1$. 
Now we show that $f_k^{(+)}(z)=\Li_k(z)$, $f_k^{(-)}(z)=\Li_{2,\underbrace{\scriptstyle 1,\ldots,1}_{k-2}}(1-z)$. 
From the assumption, the equation \eqref{PL_RH01} becomes
\begin{equation}
f_k^{(+)}(z)+\sum_{j=1}^{k-1}\frac{(-1)^j\log^j z}{j!}\Li_{k-j}(z)+f_k^{(-)}(z)=\zeta(k). \label{PL_RH01_proof1}
\end{equation}
Differentiating this equation, we have
\begin{align*}
0&=\frac{d}{dz}\left(f_k^{(+)}(z)+\sum_{j=1}^{k-1}\frac{(-1)^j\log^j z}{j!}\Li_{k-j}(z)+f_k^{(-)}(z)\right)\\
&=\frac{d}{dz}f_k^{(+)}(z)+\sum_{j=1}^{k-2}\left(
\frac{1}{z}\frac{(-1)^j\log^{j-1} z}{(j-1)!}\Li_{k-j}(z)
+\frac{(-1)^j\log^j z}{j!}\frac{\Li_{k-j-1}(z)}{z}
\right)\\
&\qquad +\frac{1}{z}\frac{(-1)^{k-1}\log^{k-2} z}{(k-2)!}\Li_{1}(z)
+\frac{(-1)^{k-1}\log^{k-1} z}{(k-1)!}\frac{1}{1-z}\\
&\qquad +\frac{d}{dz}f_k^{(-)}(z)\\
&=\frac{d}{dz}f_k^{(+)}(z)-\frac{\Li_{k-1}(z)}{z}
+\frac{1}{1-z}\frac{(-1)^{k-1}\log^{k-1} z}{(k-1)!}+\frac{d}{dz}f_k^{(-)}(z).
\end{align*}
Thus we obtain
\begin{equation}
\frac{d}{dz}f_k^{(+)}(z)-\frac{\Li_{k-1}(z)}{z}=-\frac{1}{1-z}\frac{(-1)^{k-1}\log^{k-1} z}{(k-1)!}
-\frac{d}{dz}f_k^{(-)}(z) \label{PL_RH01_proof2}
\end{equation}
on $z \in D^{(+)}\cap D^{(-)}$. Here, the left hand side of \eqref{PL_RH01_proof2} is holomorphic on $D^{(+)}$ 
and the right hand side of \eqref{PL_RH01_proof2} is holomorphic on $D^{(-)}$. 
Therefore the both sides of \eqref{PL_RH01_proof2} are entire functions. 
Using the asymptotic condition \eqref{PL_RH01_asym} and
\begin{equation*}
\frac{\Li_{k-1}(z)}{z}\to 0 \quad (z \to \infty, z \in D^{(+)}),\qquad \frac{\log^{k-1} z}{1-z}\to 0 
\quad (z \to \infty, z \in D^{(-)}),
\end{equation*}
we have that both sides of \eqref{PL_RH01_proof2} are 0 by virtue of Liouville's theorem. Therefore we have
\begin{align*}
f_k^{(+)}(z)&=\int^z\frac{\Li_{k-1}(z)}{z}dz=\Li_{k}(z)+c^{(+)}_k,\\
f_k^{(-)}(z)&=\int^z-\frac{1}{1-z}\frac{(-1)^{k-1}\log^{k-1} z}{(k-1)!}dz
=\Li_{2,\underbrace{\scriptstyle 1,\ldots,1}_{k-2}}(1-z)+c^{(-)}_k,
\end{align*}
where $c^{(+)}_k, c^{(-)}_k$ are integral constants. 
From the normalization condition \eqref{PL_RH01_norm}, it is clear that $c^{(+)}_k$ is equal to $0$. 
Finally, substituting $f_k^{(+)}(z)$ and $f_k^{(-)}(z)$ in \eqref{PL_RH01}, we obtain
\begin{equation}
\Li_{k}(z)+\sum_{j=1}^{k-1}\frac{(-1)^j\log^j z}{j!}\Li_{k-j}(z)+
\Li_{2,\underbrace{\scriptstyle 1,\ldots,1}_{k-2}}(1-z)+c^{(-)}_k=\zeta(k).
\end{equation}
Comparing the inversion formula \eqref{inversion01}, we have $c^{(-)}_k=0$. This concludes the proof.
\end{proof}

The equation \eqref{PL_RH01_proof1} is interpreted as the decomposition of the holomorphic function
\begin{equation*}
\sum_{j=1}^{k-1}\frac{(-1)^j\log^j z}{j!}\Li_{k-j}(z)
\end{equation*}
on $z \in D^{(+)}\cap D^{(-)}$ to a sum of a function $f_k^{(+)}(z)$, which is holomorphic on 
$D^{(+)}$, and a function $f_k^{(-)}(z)$, which is holomorphic on $D^{(-)}$. 
This decomposition is nothing but a Riemann-Hilbert problem of additive type. 
The theorem says that polylogarithms $\Li_k(z)$ can be constructed from the boundary value $\zeta(k)$ 
by applying this Riemann-Hilbert problem recursively. 
In this sense, we call \eqref{PL_RH01} {\bf the recursive Riemann-Hilbert problem of additive type}.

\vspace{1cm}

{\noindent
{\bf OI, Shu.}\\
Department of Mathematics, School of Fundamental Sciences and Engineering, Faculty of Science and Engineering, Waseda university. 3-4-1, Okubo, Shinjuku-ku, Tokyo 169-8555, Japan.\\
{\it e-mail:} {\tt shu\_oi@toki.waseda.jp}\\[1\baselineskip]
{\bf UENO, Kimio}\\
Department of Mathematics, School of Fundamental Sciences and Engineering, Faculty of Science and Engineering, Waseda university. 3-4-1, Okubo, Shinjuku-ku, Tokyo 169-8555, Japan.\\
{\it e-mail:} {\tt uenoki@waseda.jp}


\begin{thebibliography}{BBBL}

\bibitem[{\bf Bi}]{Bi}
{G. D. Birkhoff},
{The generalized Riemann problem for linear differential equations and the allied problems for linear difference and q-difference equations},
Proc. Am. Acad. Arts and Sciences, {\bf 49} (1914), 521-568.

\bibitem[{\bf Mu}]{Mu}
{N. I. Muskhelishvili}, 
{Singular Integral Equations},
P. Noordhoff Ltd. (1946). 


\bibitem[{\bf OiU}]{OiU}
{S. Oi and K. Ueno},
{Connection Problem of Knizhnik-Zamolodchikov Equation on Moduli Space $\cM_{0,5}$ },
preprint (2011) arXiv:1109.0715. 

\bibitem[{\bf Pl}]{Pl}
{J. Plemelj},
{Problems in the sense of Riemann and Klein},
Interscience Tracts in Pure and Applied Mathematics, No. 16, Interscience Publishers, John Wiley \& Sons Inc. New York-London-Sydney (1964).


\end{thebibliography}
\end{document}